      \newtheorem*{theorem*}{Theorem}
   \newtheorem{proposition}[subsubsection]{Proposition}
   \newtheorem{lemma}[subsubsection]{Lemma}
   \newtheorem*{conjecture*}{Conjecture}
\theoremstyle{definition}
          \newtheorem*{exercise*}{Exercise}
   \newtheorem*{example*}{Example}
   \newtheorem*{definition*}{Definition}
\newcommand{\QQ}{{\mathbb{Q}}}
\renewcommand{\cD}{{\mathcal D}}
\newcommand{\cF}{{\mathcal F}}
\newcommand{\cI}{{\mathcal I}}
\newcommand{\cJ}{{\mathcal J}}
\newcommand{\cO}{{\mathcal O}}
\def\<{\langle}
\def\>{\rangle}
\newcommand{\Spec}{\operatorname{Spec}}
\newcommand{\Div}{{\operatorname{Div}}}
\def\:{{\colon}}
\def\.{{,\dots,}}
\newcommand{\double}{\genfrac..{0pt}1
{\raise -1pt\hbox{$\scriptstyle\longrightarrow$}}{\raise 3pt\hbox
{$\scriptstyle\longrightarrow$}}}
\def\sat{{\rm sat}}
\def\int{{\rm int}}
\def\tototi{\mathbin{\mathop{\otimes}\limits^{\raise-1pt\hbox
{$\scriptscriptstyle {\rm L}$}}}}
\def\indlim{\mathop{\vrule width0pt height7pt depth
4pt\smash{\lim\limits_{\raise 1pt\hbox to 14.5pt
{\rightarrowfill}}}}}
\def\projlim{\mathop{\vrule width0pt height7pt depth
4pt\smash{\lim\limits_{\raise 1pt\hbox to 14.5pt
{\leftarrowfill}}}}}
\newcommand\displaceamount{3pt}
\newcommand{\doubledown}{\ar@<\displaceamount>[d]\ar@<-\displaceamount>[d]}
\newcommand{\doubleup}{\ar@<\displaceamount>[u]\ar@<-\displaceamount>[u]}
\newcommand{\doubleright}{\ar@<\displaceamount>[r]\ar@<-\displaceamount>[r]}
\def\gp{\text{gp}}
\begin{document}
\title{Functorial monomialization and uniqueness of centers for relative principalization}

\author[D. Abramovich]{Dan Abramovich}
\address{Department of Mathematics, Box 1917, Brown University,
Providence, RI, 02912, U.S.A}
\email{dan\_abramovich@brown.edu}

\author[M. Temkin]{Michael Temkin}
\address{Einstein Institute of Mathematics\\
               The Hebrew University of Jerusalem\\
                Edmond J. Safra Campus, Giv'at Ram, Jerusalem, 91904, Israel}
\email{temkin@math.huji.ac.il}

\author[J. W{\l}odarczyk] {Jaros{\l}aw W{\l}odarczyk}
\address{Department of Mathematics, Purdue University\\
150 N. University Street,\\ West Lafayette, IN 47907-2067}
\email{wlodar@math.purdue.edu}


\thanks{This research is supported by BSF grants 2018193 and 2022230, ERC Consolidator Grant 770922 - BirNonArchGeom, NSF grants 
DMS-2100548, DMS-2401358,  and Simon foundation grants MPS-SFM-00006274, MPS-TSM-00008103. Dan Abramovich would like to thank IHES and the Hebrew University of Jerusalem for welcoming environments during periods of preparation of this article.}

\date{\today}

\begin{abstract}
Theorem 1.2.6 of \cite{ATW-relative} provides a \emph{relatively} functorial logarithmic principalization of ideals on \emph{relative logarithmic orbifolds} $X \to B$  in characteristic 0, relying on a delicate monomialization theorem for Kummer ideals. The paper \cite{ABTW-foliated} provides a parallel  avenue through weighted blowings up. In this paper we show that, \emph{if $X \to B$ is proper,} monomialization of both Kummer and weighted logarithmic centers can be carried out in a manner which is functorial for base change by regular morphisms. This implies in particular logarithmic relative principalization of ideals and logarithmically smooth reduction of proper families of varieties in characteristic 0 in a manner equivariant for group actions and for localization on the base.
\end{abstract}
\maketitle


\section{Introduction}
\subsection{Setup}

Consider a relative logarithmic orbifold  $X \to B$ as in \cite{ATW-relative} in characteristic 0. This means $X$ and $B$ are logarithmically regular Deligne--Mumford stacks and $X\to B$ is a logarithmically regular morphism with enough derivations. The key case is where $X,B$ are toroidal varieties, and the morphism $X \to B$ is toroidal.

We are given an ideal sheaf $\cI \subset \cO_X$ which we wish to principalize.

We assume $X \to B$ is an integral logarithmic morphism.
We also assume $B$ is regular.

Write $\cF= \cD_{X/B}$, the sheaf of relative \emph{logarithmic} derivations. The results of  \cite{ATW-relative} 
 provide local parameters $t_i$ and ideal $\cI_\infty$ defined on the vanishing locus $V(t_1,\ldots,t_k)$, bundled as an object
locally of the form $J:=(t_1,\ldots,t_k, \cI_\infty^{1/d})$, of maximal invariant $(a_1,\ldots,a_k,\infty)$; here $\cI_\infty$ has the property that $\cF'^{\leq 1}(\cI_\infty ) =\cI_\infty$, where $\cF'$ is the sheaf of relative logarithmic derivations on $V(t_1,\ldots,t_k)$.

Section 3 of  \cite{ATW-relative}, provides a logarithmic blowup $B' \to B$ over which  $\cI_\infty$ is monomial, making $J$ into a \emph{submonomial} center, a well-defined \emph{Kummer ideal} that can be blown up. That monomialization construction is decidedly not functorial, as it involves choices on several junctures. A related issue is that in \cite{ATW-relative} the object $J$ is not provided an intrinsic meaning, in particular, not shown to be uniquely determined by $\cI$ as such. The parameters $t_i$ are rarely unique.

 The work \cite{ABTW-foliated}, especially Section 5, replaces $J$ by a weighted center $J^w=(t_1^{a_1},\ldots, t_k^{a_k}, \cI_\infty^{1/d})$, along the way to replacing $\cI_\infty^{1/d}$ by its weighted center $(s_1^{b_1},\ldots, s_l^{b_l})$.  The same monomialization theorem applies.
The independence of $J^w$ of choices is again not addressed in \cite{ABTW-foliated}, as only the final logarithmic center $(t_1^{a_1},\ldots, t_k^{a_k},\ s_1^{b_1},\ldots,s_l^{b_l})$ is discussed: that is a well-defined $\QQ$-ideal associated to $\cI$, independent of the choices made in its construction.

By \cite[Proposition 4.5.6]{ABTW-foliated} there is a formal lifting of $\cI_{\infty}$ from $V(t_1,\ldots,t_k)$ to $X$ so that $\cF^{\leq 1}(\cI_{\infty}) = \cI_{\infty}$, giving a well-defined $\QQ$-ideal $J$, respectively $J^w$.  In fact $J$ requires only a lifting of $\cI_{\infty}$ modulo $(t_1,\ldots,t_k)^d$, whereas $J^w$ requires only a lifting modulo $(t_1,\ldots,t_k)^{d\cdot a_k}$. We choose such a lifting. Note that so far these $\QQ$-ideals $J$ and $J^w$  depend both on the choices of $t_i$ and this lifting.


\subsection{Results}
The purpose here is to make this monomialization step as  functorial as possible:  if $\tilde B \to B$ is a \emph{regular morphism,} with pullbacks $\tilde X$ and $\tilde J$, then  the morphism $\tilde B' \to \tilde B$ has $\tilde B' = B' \times_B \tilde B$.

We define the \emph{principality locus} of $\cI_\infty$ for $X \to B$ to be the complement of the image in $B$ of the locus  where $\cI_{\infty}$ is not principal. 
We have:
\begin{proposition} \label{Prop:monomialization}
Let $X \to B$ be an \emph{integral and proper} relative logarithmic orbifold, with $B$ regular. Consider a $\QQ$-ideal $J$, respectively $J^w$, locally of the form $$J=(t_1,\ldots,t_k, \cI_\infty^{1/d}), \qquad \text{ respectively }\qquad  J^w=(t_1^{a_1},\ldots, t_k^{a_k}, \cI_\infty^{1/d}),$$ with $\cF^{\leq 1}(\cI_\infty) =\cI_\infty$. There is a functorial birational morphism $B' \to B$, with saturated pullback $X' = (X \times_BB')^\sat$, so that $J':= J\cO_{X'}$, respectively  $J'^w:= J^w\cO_{X'}$ is submonomial. The blowup $B'\to B$ is an isomorphism on the principality locus of  $\cI_{\infty}$ for $X \to B$.
\end{proposition}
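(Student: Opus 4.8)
The plan is to reduce the statement to a canonical toroidal principalization problem on the base $B$, and to use properness of $X\to B$ to descend the relevant monomial data from $X$ to $B$ in a way that commutes with regular base change. \emph{Step 1: unwind the relative monomial structure of $\cI_\infty$.} First I would analyze the hypothesis $\cF^{\leq 1}(\cI_\infty)=\cI_\infty$ with $\cF=\cD_{X/B}$. \'Etale-locally, write $X\to B$ in relative toroidal coordinates, with relative logarithmic monomials $u_1,\ldots$ and relative smooth coordinates $v_1,\ldots$, so that the Euler operators $u_i\partial_{u_i}$ together with the $\partial_{v_j}$ generate $\cF$. Invariance under the $\partial_{v_j}$ forces $\cI_\infty$ to be independent of the $v_j$, and invariance under the $u_i\partial_{u_i}$ (by a Vandermonde argument on the distinct exponent vectors) forces $\overline{M}_{X/B}$-homogeneity; hence $\cI_\infty=\sum_\alpha u^\alpha\,\pi^*\mathfrak{a}_\alpha$ is relatively monomial, with base coefficient ideals $\mathfrak{a}_\alpha\subset\cO_B$. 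Making $J$ (respectively $J^w$) submonomial is then exactly the problem of simultaneously monomializing the $\mathfrak{a}_\alpha$ by a logarithmic blowup of $B$, so the center to be constructed lives on $B$. Here I expect to use the remark that $J$ requires a lifting of $\cI_\infty$ only modulo $(t_1,\ldots,t_k)^d$ (respectively $(t_1,\ldots,t_k)^{d\cdot a_k}$), so that the descended datum below depends only on $\cI_\infty$ and not on the auxiliary lifting.

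\emph{Step 2: descend a canonical monomial datum to $B$ using properness.} The data $\{u^\alpha\mathfrak{a}_\alpha\}$ are defined only locally on $X$ and their relative exponents $\alpha$ vary over the fibers, so to produce a single blowup of $B$ I would encode $\cI_\infty$ as a monomial ideal, equivalently a piecewise-linear datum, on the Artin fan $\cA_X$, and push it forward along the induced map $\cA_X\to\cA_B$. Properness of $X\to B$ is the crucial input: it makes the fibers of $\cA_X\to\cA_B$ complete, so that the fiberwise infimum (ideal-theoretic sum) of the piecewise-linear datum is attained and finitely generated, yielding a well-defined monomial ideal $\overline{\cI}$ on $\cA_B$, hence on $B$. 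Equivalently, invoking $\pi_*\cO_X=\cO_B$ via Stein factorization and the normality of $B$, the ideal $\overline{\cI}$ is the canonical pushforward of the coefficient data. I then take $B'\to B$ to be the canonical toroidal principalization of $\overline{\cI}$, which is intrinsic to the fan of $B$ and involves no further choices.

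\emph{Step 3: submonomiality, functoriality, and the principality locus.} On the saturated pullback $X'=(X\times_B B')^{\sat}$ each $\mathfrak{a}_\alpha\cO_{X'}$ becomes monomial, and combined with the relative monomials $u^\alpha$ this makes $\cI_\infty\cO_{X'}$ monomial; hence $J':=(t_1,\ldots,t_k,(\cI_\infty\cO_{X'})^{1/d})$ — and likewise $J'^w:=J^w\cO_{X'}$ — is submonomial, a well-defined Kummer (respectively weighted) center. Functoriality for a regular morphism $\tilde B\to B$ follows because the formation of $\overline{\cI}$ commutes with regular base change (flat, indeed proper and flat, base change for $\pi_*$, together with the fact that regular morphisms preserve the logarithmic and relative-monomial structure) and because canonical toroidal principalization commutes with such base change; this yields $\tilde B'=B'\times_B\tilde B$. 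Finally, over the principality locus — the complement of the image $\pi(\{\cI_\infty\text{ not principal}\})$, which is open since $\pi$ is proper and so the image is closed — the ideal $\cI_\infty$ is principal along every fiber, so the coefficient ideals are already monomial, $\overline{\cI}$ is already principal there, and the principalization $B'\to B$ is an isomorphism.

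\emph{Main obstacle.} The delicate point is Step 2: making the descent from $X$ to $B$ canonical and finite. Away from properness the fiberwise infimum need not be attained or finitely generated, and the descended datum need not commute with base change. The hard part will be to verify that properness supplies exactly completeness of the fibers of $\cA_X\to\cA_B$, that the resulting $\overline{\cI}$ is independent of the local choices of the $t_i$ and of the lifting of $\cI_\infty$, and that its toroidal principalization genuinely monomializes $\cI_\infty$ on the saturated pullback $X'$; this is the heart of the argument.
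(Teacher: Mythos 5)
Your Step~1 is essentially sound --- it is the content of \cite[Proposition 3.4.5(iv)]{ATW-relative} together with integrality, which the paper also uses: formally locally, each graded piece of $\cI_\infty$ is of the form $u^{q_{\tilde q}}\cdot a'_{\tilde q}$ with $a'_{\tilde q}$ a base function. But Step~2, which you correctly identify as the heart, rests on a false premise and would fail. The ideal $\cI_\infty$ is \emph{not} a monomial ideal on $X$, so it defines no piecewise-linear datum on the Artin fan $\cA_X$: the Artin fan only records characteristic monoids, while the objects that must be modified are precisely the \emph{non-monomial} base coefficients $a'_{\tilde q}$ (if $\cI_\infty$ were visible on $\cA_X$, there would be nothing to prove). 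No ``fiberwise infimum of a piecewise-linear datum'' along $\cA_X\to\cA_B$ can see these coefficients. Moreover, the graded decomposition exists only on formal completions and is not canonical globally, so the coefficient ideals $\mathfrak{a}_\alpha$ do not glue to ideal sheaves on $B$; your alternative via $\pi_*\cO_X=\cO_B$ never specifies which ideal is pushed forward, and principalizing, say, $\pi_*\cI_\infty$ would not make each graded coefficient principal. Consequently your ``canonical toroidal principalization of $\overline{\cI}$, intrinsic to the fan of $B$'' is unavailable twice over: $\overline{\cI}$ has not been constructed, and even if it had been, it would be a general (non-monomial) ideal on $B$ requiring full principalization, not a toroidal one. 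You have also misidentified the role of properness: it does not enter through completeness of fibers of $\cA_X\to\cA_B$.

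The paper's proof avoids the descent problem entirely, and properness enters exactly where you would need a substitute. First, Molcho's functorial Kummer-\'etale cover $B_1\to B$ makes $X_1=(X\times_B B_1)^\sat\to B_1$ saturated (a step you omit; submonomiality of $J$ itself is recovered afterwards by \cite[Lemma 3.1.6]{ATW-relative}). Then the functorial flattening theorem of Raynaud--Gruson/Rydh/McQuillan --- which requires properness --- is applied to the Rees algebra of $J\cO_{X_1}$ over $B_1$, followed by functorial resolution of the base so that it is again regular and log regular and the torsion divisor is monomial. Flattening is a universal operation, hence canonical and compatible with regular base change, which is what makes the construction functorial without ever producing a global ideal on $B$. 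The local computation then closes the argument: flatness of $J\cO_{X_1''}$ forces each graded piece of $\cI_\infty$ to admit at most one generator $a'_{\tilde q}u^{q_{\tilde q}}$ (principality of coefficients), and the torsion being supported on the monomial divisor forces $(a'_{\tilde q})=(u^{q_0})$, i.e.\ monomiality. To salvage your route you would need to define the graded coefficient ideals canonically and coherently on $B$ (not on Artin fans) and prove their compatibility with regular base change --- in effect a new and harder theorem than the flattening argument it would replace.
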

The key for this is functorial flattening. To address the choices  we have:
\begin{proposition}\label{Prop:uniqueness} The object $J=(t_1,\ldots,t_k, (\cI_\infty)^{1/d})$, respectively $J^w=(t_1^{a_1},\ldots, t_k^{a_k}, \cI_\infty^{1/d})$, constructed for an ideal $\cI \subset \cO_X$  in \cite{ATW-relative}, respectively \cite{ABTW-foliated}, is uniquely defined as a $\QQ$-ideal, and is independent of the choices of parameters and liftings. \end{proposition}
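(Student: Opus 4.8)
The statement is local on $X$ and unaffected by completion, so I would argue in $\widehat{\cO}_{X,x}$ at a point $x$. The integers $k$ and $d$, the weights $(a_1,\ldots,a_k)$, and the maximality of the invariant $(a_1,\ldots,a_k,\infty)$ are extracted from the intrinsic logarithmic derivative filtration $\cF^{\leq\bullet}(\cI)$ and are already canonical in the constructions of \cite{ATW-relative} and \cite{ABTW-foliated}; hence only the $\QQ$-ideal itself must be pinned down. Because $J$ and $J^w$ are built from the same $\cI_\infty$ and differ only in the weight carried by each $t_i$ and in the precision of the lifting they require (modulo $(t_1,\ldots,t_k)^d$ versus modulo $(t_1,\ldots,t_k)^{d\cdot a_k}$), I would treat them in parallel and separate the two sources of ambiguity named in the introduction: the lifting of $\cI_\infty$ off $V(t_1,\ldots,t_k)$, and the parameters $t_i$.

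For a fixed system $t_i$, the plan is to show that the closure condition $\cF^{\leq 1}(\cI_\infty)=\cI_\infty$ determines the lift uniquely from its restriction to $V(t_1,\ldots,t_k)$. Enough derivations guarantees that, after extending the $t_i$ to a system of relative parameters, the derivations $\partial_{t_i}$ lie in $\cF=\cD_{X/B}$ and are transverse to $V(t_1,\ldots,t_k)$; in characteristic $0$ an $\cF$-closed $\QQ$-ideal is then recovered from its restriction by the finite Taylor expansion $f=\sum_{|\alpha|<N}\tfrac1{\alpha!}\bigl((\partial_t^\alpha f)|_{V(t)}\bigr)\,t^\alpha$, whose coefficients all lie in the given ideal $\cI_\infty$ on $V(t_1,\ldots,t_k)$, with $N=d$ (respectively $N=d\cdot a_k$) since only that precision is needed. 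Existence of such a lift is \cite[Proposition 4.5.6]{ABTW-foliated}; the step to check is that this reconstruction is itself $\cF$-closed and independent of the auxiliary transverse derivations used, so that, for fixed $t_i$, both $J$ and $J^w$ are determined.

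The remaining and, I expect, hardest point is independence of the $t_i$, which are maximal-contact parameters and hence unique only modulo lower-order correction. The plan is to show that any admissible change $t_i\rightsquigarrow t_i'$ differs by terms whose weighted order is high enough to be absorbed by the weighting $(a_1,\ldots,a_k,\infty)$, so that the associated weighted $\QQ$-ideal — read through its Rees algebra, i.e.\ its weighted integral closure — is literally unchanged, and the accompanying $\cF$-closed lift of $\cI_\infty$ transforms into the lift attached to $t_i'$. Concretely I would adapt W{\l}odarczyk's homogenization to the relative logarithmic orbifold setting: the homogenized ideal manufactured from $\cF^{\leq\bullet}(\cI)$ is invariant under the formal changes of parameter that move one maximal contact to another, and this invariance is exactly what shows that the non-intrinsic part of $t_i$ does not enter $J$, respectively $J^w$. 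The technical heart is to verify that homogenization is compatible with the \emph{relative} log derivations $\cF=\cD_{X/B}$ rather than the absolute derivations of the classical argument; this is where logarithmic regularity of $X\to B$ and enough derivations are used. Combining the two steps, $J$ and $J^w$ depend only on $\cI$, giving the claimed well-definedness; this refines the already-known canonicity of the final logarithmic center of \cite{ABTW-foliated}, which $J^w$ determines after the further monomialization of $\cI_\infty$.
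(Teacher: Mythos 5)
Your first step --- uniqueness of the $\cF$-closed lift once the hypersurface $V(t_1,\ldots,t_k)$ is fixed --- is essentially sound: in $\hat\cO_{X,x}\cong \hat\cO_{V(t),x}[[t_1,\ldots,t_k]]$ the Taylor argument shows any $\cF$-closed ideal restricting to $\cI_\infty$ is contained in $\cI_\infty[[t]]$, and equality follows by Nakayama; this is consistent with the role played by \cite[Proposition 4.5.6]{ABTW-foliated}. The genuine gap is your third step, independence of the parameters $t_i$ themselves, which you correctly identify as the hardest point but then only \emph{plan} rather than prove: ``adapt W{\l}odarczyk's homogenization to the relative logarithmic orbifold setting'' and ``verify that homogenization is compatible with the relative log derivations $\cD_{X/B}$'' is precisely the content that would constitute the proof, and nothing in your proposal carries it out. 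It is not a routine verification: classical homogenization shows that two maximal contacts are related by a (formal or \'etale) automorphism preserving the homogenized ideal, which yields \emph{conjugacy} of the resulting data, whereas the proposition asserts literal \emph{equality} of $\QQ$-ideals; moreover a change $t_i\rightsquigarrow t_i'$ simultaneously changes the hypersurface $V(t)$, the coefficient ideal $\cI_\infty$ living on it, and its lift, all in a setting where only relative logarithmic derivations are available. The paper itself flags exactly this: a more direct argument ``can likely be obtained by thoroughly revisiting the structures introduced in \cite{ATW-relative}, but this is not our goal here'' --- your proposal is that unexecuted revisiting.

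For contrast, the paper's proof sidesteps any comparison of maximal contacts. It first invokes \emph{existence} of a monomialization $B'\to B$ (\cite[Section 3]{ATW-relative}; functoriality is not needed for this proposition), reducing to the case where $J$, respectively $J^w$, is submonomial. For a submonomial center the Kummer blowup, respectively the log weighted blowup, is already known to be canonical and to depend only on $\cI$ (\cite[Theorem 1.2.6]{ATW-relative}, respectively \cite[Lemma 6.1]{Quek}); the center is then recovered \emph{intrinsically from its blowup} by pushing forward powers of the exceptional ideal, since the algebra $\oplus \pi_*\cJ'^n$ coincides with the Rees algebra of the submonomial center and is therefore finitely generated, and its further pushforward to $X$ coincides with the Rees algebra of the original $J$. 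This converts the known uniqueness of the blowup into uniqueness of the center, bypassing entirely the homogenization-type analysis on which your argument depends.
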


As we see below, what makes this possible is that after any monomialization of $J$, its  Kummer blowup is independent of choices --- a property shown in \cite{ATW-relative}. Similarly, for a monomialized $J^w$ we have that its log weighted blowup is well defined, as shown in \cite{Quek}.

\subsection{Remarks}

We note that our proof of Proposition \ref{Prop:uniqueness} relies on the existence of monomialization. This is already proved in \cite[Section 3]{ATW-relative}, but also follows from  Proposition \ref{Prop:monomialization} if $X \to B$ is proper; for general $X\to B$ one can instead use any flattening procedure in the proof of Proposition \ref{Prop:monomialization} as functoriality is not needed  for Proposition \ref{Prop:uniqueness}. It is likely that a more direct argument can be obtained by thoroughly revisiting the structures introduced in \cite{ATW-relative}, but this is not our goal here.


Properness is a key assumption in the proof of Proposition \ref{Prop:monomialization}, as we use functorial flattening results that hold for proper morphisms.

Since we use flattening, if we start with a non-integral $X \to B$ it is advantageous  to first make it flat, namely integral. Assuming integrality from the outset allows that modifications in the process only touch loci of $B$ lying under the support in $X$ of the $\QQ$-ideal $J$  --- the locus where $\cI_\infty$ must be modified anyway. It also allows us to use the criteria in \cite[Sections 3.3 and 3.4]{ATW-relative}. The same holds for the assumption that $B$ is regular.

We note that a logarithmically smooth morphism $X \to B$, where $X,B$ are log regular and saturated \emph{and $B$ is regular,} is integral if and only if it is equidimensional. This is a consequence of miracle flatness, as in \cite[Remark 4.6]{AK}. In \cite[Section 4]{AK} it is shown how to make $X \to B$ equidimensional using  canonical combinatorial moves: one subdivides the complexes $\Sigma_X$ and $\Sigma_B$ so that the image of every cone in $\Sigma_X$ is a cone of $\Sigma_B$; after this one can use functorial resolution to make $B$ regular. Thus the integrality assumption is easy to satisfy as long as we make $B$ regular.


\section{Uniqueness of non-weighted centers}

We start the proof of Proposition \ref{Prop:uniqueness} by addressing the submonomial case:

\begin{lemma} Assume $J$ or $J^w$ is submonomial. Then it is uniquely defined as a $\QQ$-ideal, independent of the choices of parameters. Its formation commutes with saturated modifications $B' \to B$ and regular morphisms $\tilde{B} \to B$.\end{lemma}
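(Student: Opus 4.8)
The plan is to separate the presentation $J=(t_1,\ldots,t_k,\cI_\infty^{1/d})$ into an intrinsic numerical part and the geometric data subject to choice, and to show that neither the choice of parameters $t_i$ nor the choice of lifting of $\cI_\infty$ affects the resulting $\QQ$-ideal. First I would record that the numerical datum $(a_1,\ldots,a_k,\infty)$ and the integer $d$ are read off from the maximal invariant of $\cI$ for $X\to B$, which is the canonical resolution invariant of \cite{ATW-relative}; thus $k$, the weights $a_i$, and $d$ depend only on $\cI$. This reduces the statement to showing that the $\QQ$-ideal generated by $t_1,\ldots,t_k$ together with $\cI_\infty^{1/d}$ is independent of the two remaining choices, which I would treat separately.

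I would first fix the parameters $t_i$ and dispose of the lifting ambiguity. The locus $V(t_1,\ldots,t_k)$ is the maximal contact locus — the reduced locus on which the invariant of $\cI$ attains its maximum — and is therefore intrinsic. The defining property $\cF^{\leq 1}(\cI_\infty)=\cI_\infty$ together with submonomiality is exactly what removes the lifting ambiguity: a monomial ideal stable under the first-order relative logarithmic derivations is determined by its boundary monomials, and, as already noted in the setup, $J$ (respectively $J^w$) depends on $\cI_\infty$ only modulo $(t_1,\ldots,t_k)^d$ (respectively $(t_1,\ldots,t_k)^{d\,a_k}$). Hence, for fixed parameters, any two admissible liftings generate the same $\QQ$-ideal.

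The cleanest way to dispose of the parameter ambiguity, avoiding a term-by-term matching of two incomparable coordinate systems, is to pass to the blowup. A submonomial $J$ has a well-defined Kummer blowup, and by \cite{ATW-relative} this Kummer blowup is independent of the choices entering its construction; for $J^w$ the corresponding log weighted blowup is independent of choices by \cite{Quek}. Since a submonomial center is recovered from its blowup — one reconstructs the integrally closed $\QQ$-ideal from the exceptional structure, for instance via the Rees algebra whose $\Proj$ is the blowup, with the weights recorded by the normal data of the exceptional divisor — equality of blowups for two choices of parameters forces equality of the centers as $\QQ$-ideals. Combined with the previous paragraph, this gives full independence of choices. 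I expect the main obstacle to lie precisely in this recovery step: one must verify that passing to the Kummer, respectively log weighted, blowup loses no information about $J$, so that the assignment from submonomial centers with the prescribed invariant to their blowups is injective.

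Finally, for the two functoriality clauses I would note that every ingredient above is compatible with the relevant morphisms. The order function, the sheaf $\cF=\cD_{X/B}$ of relative logarithmic derivations, maximal contact, and the monomiality of $\cI_\infty$ all pull back correctly along a regular morphism $\tilde B\to B$ and along a saturated modification $B'\to B$. Concretely, one may compute $J$ downstairs, pull back the chosen parameters and lifting, and observe that the pullback is an admissible presentation upstairs; by the intrinsic characterization just established it computes the same $\QQ$-ideal as the one constructed directly on $\tilde X\to\tilde B$, respectively on $X'\to B'$. Hence the formation of $J$, and likewise of $J^w$, commutes with saturated modifications and with regular base change, as claimed.
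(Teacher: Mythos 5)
Your overall strategy---reduce everything to the canonicity of the Kummer blowup from \cite{ATW-relative} (respectively the log weighted blowup from \cite{Quek}) and then recover the center from its blowup---is exactly the strategy of the paper's proof. However, the step you yourself flag as ``the main obstacle,'' the recovery of $J$ from the blowup, is precisely where the content of the proof lies, and your sketch of it does not work as stated: knowing the blowup $\pi\colon X' \to X$ as a space (or stack) does not determine a Rees algebra, since many different algebras---Veronese subalgebras, integral closures, distinct centers supported on the same locus---have the same $\Proj$. The paper closes this gap with a concrete identification: by \cite[Theorem 1.2.6]{ATW-relative}, not only the blowup $\pi\colon X'\to X$ but also its \emph{exceptional ideal sheaf} $\cJ'$ is well defined, compatible with base change, and depends only on the ideal $\cI$; and the Rees algebra of any admissible presentation of $J$ coincides with the pushforward algebra $\oplus_n \pi_*\cJ'^n$. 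That algebra is therefore intrinsic, finitely generated (because it agrees with the Rees algebra of $J$), and compatible with base change, so $J$ is well defined as a $\QQ$-ideal; for $J^w$ one cites \cite[Lemma 6.1]{Quek} in place of the ATW theorem. Your injectivity formulation (``centers with prescribed invariant $\to$ blowups is injective'') would need the blowups identified together with their exceptional ideals, not merely as modifications of $X$; the pushforward formula replaces any such abstract injectivity claim.

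A second consequence of this unified argument is that your separate preliminary treatment of the lifting ambiguity is both unnecessary and, as written, not a proof. The remark in the setup that $J$ ``requires only a lifting of $\cI_\infty$ modulo $(t_1,\ldots,t_k)^d$'' says that a lifting to that order suffices to \emph{define} the object; it does not say that any two such liftings yield the same $\QQ$-ideal, which is part of what the lemma asserts and is established only by the intrinsic description $\oplus_n \pi_*\cJ'^n$, which simultaneously disposes of the parameter ambiguity, the lifting ambiguity, and both functoriality clauses. Likewise, your opening reduction via the intrinsic invariant $(a_1,\ldots,a_k,\infty)$ and $d$ is harmless but plays no role once the pushforward identification is in hand.
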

\begin{proof} It is shown in \cite[Theorem 1.2.6]{ATW-relative} that there is a functorial  Kummer blowup $\pi: X' \to X$ with well-defined exceptional ideal sheaf $\cJ'$, both compatible with base change $B' \to B$, and depending only on the given ideal $\cI$ giving rise to $J$. The algebra $\oplus \pi_*\cJ'^n$ on $X$
is compatible with base change. It is finitely generated since it coincides with that of $J$, hence $J$ is well-defined as a $\QQ$-ideal. For $J^w$ use  \cite[Lemma 6.1]{Quek} instead.
\end{proof}

\begin{proof}[Proof of    Proposition \ref{Prop:uniqueness}] By \cite[Section 3]{ATW-relative} there is a modification $B' \to B$, with saturated pullback $X' \to B'$, so that the object $J' = J\cO_X'$ is submonomial; the argument uses $\cI_\infty$ only so it applies to $J^w$. Alternatively, when $X \to B$ is proper this follows from Proposiotion \ref{Prop:monomialization}.

By the lemma, $J'$ is a well-defined $\QQ$-ideal, and it is compatible with further modifications of $B'$, so we might as well choose such $B'$. Let $R'$ be the Rees algebra of $J'$. The push-forward $R$ of $R' $ in $X$ is therefore a well-defined algebra of ideals. It is finitely generated since it coincides with the algebra of the original $J$. Hence it gives a $\QQ$-ideal.
\end{proof}

\section{Monomialization}
We treat the unweighted and weighted cases in the same way, so in this section we write $J$ for either $J$ or $J^w$.
\subsection{A Kummer cover} The morphism $X \to B$ is integral but not saturated, and monomiality is easier to handle with saturated morphisms. On the other hand, \cite[Lemma 3.1.6]{ATW-relative} says that if we have a Kummer cover $B_1 \to B$ where the map $X_1 := (X \times _B B_1)^\sat \to B$ is saturated and $J_1$ is submonomial then $J$ is submonomial. So it is to our advantage to work on such $B_1$.

By \cite[Theorem 1.0.1]{Molcho} there is a functorial Kummer-\'etale and birational morphism $B_1 \to B$ of stacks so that the \emph{saturated} base change $X_1 = (X \times_{B_1} B)^\sat \to B_1$ is a saturated morphism. This stack theoretic covering avoids the choice of a Kawamata covering, which is necessarily not functorial,  in the  treatment of \cite[Proposition 5.1]{AK}.



We obtain a diagram of fs log regular stacks

\begin{equation} \label{saturation}\xymatrix{X_1 \ar[d]\ar[dr] \\ B_1\ar[dr] & X\ar[d] \\ & B,}\end{equation}
where $X_1 \to B_1$ is a saturated morphism, $B_1 \to B$  and $X_1 \to X$ are   Kummer morphisms, and the morphism $X \to B$ is integral but not necessarily saturated. By construction $B_1 \to B$ is a coarsening morphism; the same is true for $X_1 \to X$ but will not be necessary in our arguments.

If $B'' \to B$ is a birational  morphism of regular  logarithmic orbifolds, we denote by
\begin{equation} \label{pullback}\xymatrix{X''_1 \ar[d]\ar[dr] \\ B''_1\ar[dr] & X''\ar[d] \\ & B''}\end{equation}
the saturated pullback of diagram \eqref{saturation}, so again $X''_1 \to B''_1$ is a saturated morphism, $B''_1 \to B''$  and $X''_1 \to X''$ are   Kummer morphisms, and the morphism $X'' \to B''$ is integral but not necessarily saturated, and $B''_1 \to B''$ is a coarsening morphism.

\begin{lemma}\label{Lem:flattening}
There exists a functorial representable modification $B'' \to B$  of regular logarithmic orbifolds, such that in diagram \eqref{pullback} $J\cO_{X_1''}$ is \emph{flat} and $\cO_{X_1''} / J\cO_{X_1''}$ has \emph{monomial} torsion in pure codimension 1. The morphism $B''\to B$ is an isomorphism over the principality locus of $\cI_\infty$  for $X \to B$.
\end{lemma}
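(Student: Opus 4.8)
The plan is to construct $B''$ by \emph{functorial flattening} of the $\QQ$-ideal $J$, performed on the saturated Kummer cover $X_1 \to B_1$ and then descended to $B$. Properness is exactly what makes this available: the saturated base change $X_1 \to B_1$ is again proper, and a functorial flattening theorem for proper morphisms (of Raynaud--Gruson type, in its canonical blowup-theoretic form) applies. I would apply such a functorial flattening to the coherent sheaf $\cO_{X_1}/J\cO_{X_1}$, producing a canonical birational modification $B_1'' \to B_1$ after which the transform of $J\cO_{X_1}$ is flat over $B_1''$. The functoriality of the construction is essential, both for the descent below and for the compatibility with regular base change $\tilde B \to B$ demanded elsewhere.

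Next I would descend $B_1'' \to B_1$ to a modification $B'' \to B$. Since $B_1 \to B$ is a birational coarsening produced functorially by \cite{Molcho}, and the flattening center is itself functorial, the center on $B_1$ is the inverse image of a canonical center on $B$; equivalently, the flattening blowup is equivariant for the groupoid presenting $B_1$ over $B$, hence descends. Taking the saturated pullback of \eqref{saturation} along $B'' \to B$ then recovers $B_1'' \to B_1$ in the form \eqref{pullback}, so that $J\cO_{X_1''}$ is flat as claimed. The isomorphism over the principality locus is then automatic: where $\cI_\infty$ is principal it is, by integrality of $X \to B$ together with the relation $\cF^{\leq 1}(\cI_\infty)=\cI_\infty$, generated by a relative nonzerodivisor, so $\cO/J$ is already flat over $B$ and the flattening center avoids the preimage of the principality locus.

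There remains the \emph{monomial} torsion in pure codimension $1$, and this is where the logarithmic structure enters rather than pure flattening. Flattening controls $\cO_{X_1''}/J\cO_{X_1''}$ over the interior, but along the boundary divisors of $X_1''$ the quotient can still carry torsion. Here I would use that $X_1 \to B_1$ is \emph{saturated} and logarithmically smooth: on a codimension-$1$ boundary stratum the situation is toric, so the residual torsion is cut out by monomials in the boundary parameters, i.e. is monomial in pure codimension $1$. If the flattening does not already leave the torsion in this shape, a canonical combinatorial subdivision of the cone complexes, as in \cite[Section 4]{AK}, arranges it without affecting flatness over the interior.

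The main obstacle I anticipate is reconciling the two sides of the diagram. Functorial flattening lives naturally on the \emph{saturated} proper morphism $X_1 \to B_1$, whereas the conclusion, and the required compatibility with arbitrary regular base change, must be read off the \emph{non-saturated} morphism $X \to B$ through the coarsening $B_1 \to B$. Ensuring that the flattening blowup descends to $B$ while simultaneously keeping the boundary torsion monomial precisely in codimension $1$ is the delicate step; it is the functoriality of both Molcho's cover and the chosen flattening that makes the descent and the base-change compatibility go through.
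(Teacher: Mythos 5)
Your first step---functorial flattening performed on the saturated cover $X_1 \to B_1$---matches the paper, but two of your later steps have genuine gaps. First, the descent: you claim the flattening blowup $B_1' \to B_1$ descends to $B$ because its center is ``the inverse image of a canonical center on $B$,'' or by equivariance for the groupoid presenting $B_1$ over $B$. This is unjustified: $B_1 \to B$ is Kummer-\'etale but \emph{not} a regular morphism (its fibers over the boundary are non-reduced), so functoriality of flattening for regular morphisms says nothing about compatibility with $B_1 \to B$, and there is no reason the flattening center on $B_1$ is pulled back from $B$. The paper avoids descending centers altogether: it takes $B'$ to be the \emph{relative coarse moduli space} of $B_1'$ over $B$, which is automatically a representable modification of $B$, and only afterwards reconstructs diagram \eqref{pullback} by saturated pullback.

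Second, and more seriously, your argument for monomiality fails. After flattening, the $B_1'$-torsion of $\cO_{X_1'}/J\cO_{X_1'}$ is indeed supported in pure codimension $1$, but its support, and its image $D_{B_1'} \subset B_1'$, need not lie in the toric boundary at all---it is just some divisor. Your appeal to the ``toric'' local structure along boundary strata assumes exactly what must be arranged, and a combinatorial subdivision of cone complexes cannot repair this: subdivisions only modify the existing boundary and can never turn a non-boundary divisor into a monomial one. The paper's key step, absent from your proposal, is to \emph{enlarge the logarithmic structure}: endow $B'$ with the divisor consisting of the old boundary, the exceptional locus of $B' \to B$, \emph{and} the image $D_{B'}$ of the torsion support, and then apply functorial resolution of singularities to $B'$ so that this total divisor becomes normal crossings. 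Only after this resolution is the torsion supported on the (new) monomial boundary of $B''$, which is what ``monomial torsion'' means here and what the proof of Lemma \ref{Lem:submonomial} consumes. A smaller point: the paper flattens the whole Rees algebra $\Spec_{X_1} R_J$ rather than the single sheaf $\cO_{X_1}/J\cO_{X_1}$, since the conclusion requires the $\QQ$-ideal $J\cO_{X_1''}$---that is, all of its graded components---to be flat over $B_1''$.
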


The proof proceeds by first flattening $J\cO_{X_1}$ over $B_1$ and then resolving singularities to retain the logarithmic structure and make the torsion monomial.

%
%


\subsection{Flattening}
The object $J\cO_{X_1}$ is represented by its Rees algebra  $R_J:=\oplus_{i\geq 0} J_i$, a quasi-coherent $\cO_{X_1}$-algebra generated in finitely many degrees by the coherent components $J_i$.

By Hironaka or Raynaud--Gruson, Rydh, McQuillan, see in particular  \cite[Theorem 1.2]{McQuillan-flat} or \cite[Theorem 1.1]{Rydh-flattening}, there is a functorial flattening of $\Spec_{X_1}R_J$ on $X_1 \to B_1$:  a blowup $B_1'\to B_1$, functorial for regular morphisms on $B_1$ (hence for regular morphisms on $B$), with pullback $X_1'= X_1\times_{B_1} B_1'$, such that $$\Spec_{X_1'}\left(R_J\otimes \cO_{X_1'}/B_1'\text{-torsion }\right) =: \Spec_{X_1'}\left( \oplus J_i'\right)$$  is flat over  $B_1'$.   The induced homomorphism $J_i' \to \cO_{X_1'}$ remains injective, so $  \oplus J_i'$ remains a finitely generated Rees algebra representing a $\QQ$-ideal $J\cO_{X_1'}$.

Functoriality implies that $B'_1 \to B_1$ is an isomorphism in the  loci over which $J\cO_{X_1}$ is flat. This is automatic where $\cI_{\infty}$ is principal.

It follows that $\cO_{X_1'}/J\cO_{X_1'}$ has $B_1'$-torsion in pure codimesnion 1. Call its support $D_{X_1'} \subset X_1'$, with image $D_{B_1'}\subset B_1'$.

Let $B_1' \to B'$ be the relative coarse moduli space over $B$. Write $D_{B'}\subset B'$ for the image of $D_{B_1'}$. In the diagram
$$\xymatrix{B_1'\ar[r]\ar[dr] & B_1 \ar[dr] \\
& B'\ar[r] & B}$$
the horizontal arrows are birational, the diagonal arrow are coarse moduli spaces, but so far $B_1' $ and $ B'$ are not endowed with nice logarithmic structures.

\subsection{Resolution} We endow $B'$ with the boundary divisor coming from $B$, the exceptional of $B' \to B$, and $D_{B'}$. Let $B'' \to B$ be the result of functorial resolution of singularities of $B'$ making the preimage of the divisor normal crossings. Then $B''$ is regular and log regular.

Pulling back $X'' := (X \times _B B'')^\sat$ we have that $X'' \to B''$ is log regular and equidimensional, hence integral.

Pulling back $B_1''=(B_1 \times _B B'')^\sat$ we have that $B_1''$ is log regular and $B_1'' \to B''$ is Kummer-\'etale.

Finally taking $X_1''=(X \times _B B_1'')^\sat = X_1 \times_{B_1} B_1'' $ we have that $X_1'' \to X''$ is Kummer-\'etale, $X_1'' \to B_1''$ log regular and saturated, the $\QQ$-ideal $J\cO_{X_1''}$ is flat over $B_1''$, and  $\cO_{X_1''}/J\cO_{X_1''}$ has \emph{monomial} $B_1''$-torsion in pure codimesnion 1.





 We obtain a diagram \eqref{pullback},
the saturated pullback of  \eqref{saturation}, 
and this diagram has the same properties as \eqref{saturation}. This completes the proof of Lemma \ref{Lem:flattening}. \qed


%
%
%
%
%

\subsection{Proof of monomialization}

We now replace diagram \eqref{saturation} by \eqref{pullback}, and assume   $J_1:= J\cO_{X_1} $ is flat over $B_1$, and the $B_1$-torsion of $\cO_{X_1}/J\cO_{X_1}$ is  monomial in pure codimension 1. At the same time we have guaranteed that $X\to B$ is still log regular, 
$B$ is regular,  and $B_1 \to B$ is Kummer.

We now claim:


\begin{lemma}\label{Lem:submonomial}

$J_1$ is a submonomial center.
\end{lemma}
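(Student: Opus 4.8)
<br>

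The plan is to show that the flatness and monomial-torsion conditions established for $J_1 := J\cO_{X_1}$ are precisely what is needed to make $J_1$ submonomial, by reducing to a local combinatorial statement about the structure of $\cI_\infty$. Recall that $J_1$ is locally of the form $(t_1,\ldots,t_k,\cI_\infty^{1/d})$, respectively $(t_1^{a_1},\ldots,t_k^{a_k},\cI_\infty^{1/d})$, and submonomiality means that $\cI_\infty\cO_{X_1}$ becomes monomial with respect to the logarithmic structure of the saturated morphism $X_1 \to B_1$, so that $J_1$ assembles into a well-defined Kummer, respectively weighted, center. Since the property $\cF^{\leq 1}(\cI_\infty)=\cI_\infty$ persists after the saturated Kummer base change (the relative derivations pull back), the core task is to verify that $\cI_\infty\cO_{X_1}$ is monomial along the relative boundary.

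The key steps, in order, would be as follows. First I would reduce to the local situation at a point of $X_1$, using that $X_1 \to B_1$ is saturated and log regular, so that locally we have a toric-type chart with relative logarithmic coordinates. Second, I would exploit the condition $\cF^{\leq 1}(\cI_\infty)=\cI_\infty$: this says $\cI_\infty$ is invariant under relative logarithmic derivations of order one, which forces $\cI_\infty$ to be generated by functions that are, fiberwise over $B_1$, pulled back from the base monomials --- in other words, $\cI_\infty\cO_{X_1}$ has no "horizontal" content and is determined by its restriction to the base combinatorics. Third, I would bring in flatness of $J_1$ over $B_1$: flatness guarantees that the generators of $\cI_\infty\cO_{X_1}$ do not drop rank or acquire embedded components over the base, so the Rees algebra $\oplus J_i'$ behaves uniformly in the fibers. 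Fourth, the hypothesis that $\cO_{X_1}/J_1$ has \emph{monomial} torsion in pure codimension $1$ pins down the codimension-one behavior: the torsion, being monomial, lies in the logarithmic boundary, so $\cI_\infty\cO_{X_1}$ differs from a genuine monomial ideal only by a unit along each boundary divisor. Combining the derivation-invariance (which controls the generic structure) with flatness (which propagates it across fibers) and the monomial-torsion condition (which controls codimension one) should yield that $\cI_\infty\cO_{X_1}$ is monomial, and hence that $J_1$ is submonomial by applying the criterion of \cite[Sections 3.3 and 3.4]{ATW-relative}.

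The main obstacle I anticipate is the passage from the \emph{pointwise} or codimension-one monomiality to monomiality as a global property of $\cI_\infty\cO_{X_1}$ on all of $X_1$. Flatness over $B_1$ and monomial torsion in pure codimension $1$ together control what happens generically and along boundary divisors, but an ideal can fail to be monomial at a higher-codimension stratum even when it is monomial in codimension one; the subtle point is to argue that the condition $\cF^{\leq 1}(\cI_\infty)=\cI_\infty$ rules out such pathologies by propagating monomiality inward from the divisors. I expect this to require the criteria of \cite[Sections 3.3 and 3.4]{ATW-relative}, which presumably characterize submonomial centers in terms of exactly these derivation-invariance and flatness conditions; the technical heart is verifying that the hypotheses of those criteria are met after the saturated base change, since saturation is what converts the relative boundary into a genuinely toric boundary along which "monomial in codimension one" can be upgraded to "monomial" using log regularity of $X_1 \to B_1$.
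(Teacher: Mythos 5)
Your outline assembles the right ingredients (localization, the condition $\cF^{\leq 1}(\cI_\infty)=\cI_\infty$, flatness, monomial torsion in pure codimension one, and the criteria of \cite[Sections 3.3--3.4]{ATW-relative}), and its overall shape matches the paper's proof. But there is a genuine gap, and it sits exactly where you yourself flag it: you never produce the mechanism that upgrades ``monomial torsion in codimension one'' to ``$\cI_\infty$ is monomial.'' Your second step --- that derivation-invariance forces $\cI_\infty$ to be generated by functions ``pulled back from base monomials'' --- is asserted, not proved, and as stated it is not quite the right statement; and your fourth step's ``so $\cI_\infty\cO_{X_1}$ differs from a genuine monomial ideal only by a unit along each boundary divisor'' is precisely the conclusion that needs an argument, not a consequence one can read off from the torsion hypothesis.

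The paper closes this gap as follows. After \'etale localization to make the log structure at $b$ Zariski (\cite[Lemma 3.1.5]{ATW-relative}) and using properness to reduce to an algebraically closed residue field, it verifies Axioms A1--A7 of \cite[Sections 3.3--3.4]{ATW-relative} (exactness from integrality, torsion-freeness of $Q^{\gp}/P^{\gp}$ from saturatedness, etc.). This makes available the structural result \cite[Proposition 3.4.5 (iv)]{ATW-relative}: $\cI_\infty$ is generated by \emph{homogeneous} elements $a^{j}_{\tilde q}=\sum_{q\in Q_{\tilde q}}a^j_q u^q$ with coefficients $a^j_q$ in the completed base ring, graded by the image $\tilde Q$ of $Q$ in $Q^{\gp}/P^{\gp}$. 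This is the precise content behind your ``no horizontal content'' heuristic. Then integrality of $X_1\to B_1$ enters in an essential way you do not use: it makes each $P$-set $Q_{\tilde q}$ free on a single generator $q_{\tilde q}$, so every homogeneous generator factors as $a'_{\tilde q}\cdot u^{q_{\tilde q}}$ with $a'_{\tilde q}$ a \emph{function on the base}. Flatness (pure codimension-one torsion) cuts down to at most one generator per $\tilde q$, and the hypothesis that the torsion divisor $D_{B_1}$ is monomial then forces $\Div(a'_{\tilde q})$ to be monomial, i.e.\ $(a'_{\tilde q})=(u^{q_0})$ for some $q_0\in P$. This reduction of the whole question to divisors of base functions is what rules out the higher-codimension pathology you worry about; without it (or an equivalent substitute) the proposal does not close.
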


\begin{proof}

{\sc Verification of axioms.}
This can be checked on formal completion at any geometric point  $p\in X_1$ lying over  a geometric point  $b\in B_1$.

By \cite[Lemma 3.1.5]{ATW-relative} we may test that $J_1$ is submonomial after pulling back $X_1$ and $J_1$ via an \'etale morphism $\tilde B_1 \to B_1$ with $b$ in its image, hence we may assume the logarithmic structure at $b$ is Zariski.

By properness $X_1, J_1$ are locally  of finite presentation over $B$. So we may replace $b$ and $B_1$ so that the residue field at $b$ is algebraically closed.

We may now verify Axioms A1--A7 of  \cite[Sections 3.3-3.4]{ATW-relative} hold, so that we may apply results of that section. We denote by $P$ the characteristic monoid at $b$ and by $Q$ the characteristic monoid at $p$.
\begin{itemize}
\item[A1.] $X_1 \to B_1$ is a logarithmic orbifold since it is logarithmically regular, and the property of having enough derivation lifts from $X \to B$.
\item[A2.] $B\in \mathbf{B}$, namely it is logarithmically regular, by assumption.
\item[A3.] $X_1 \to B_1$ has abundance of derivations since $X \to B$ does, by \cite[Proposition 2.7.2]{ATW-relative}.
\item[A4.] The residue field of $b$ is algebraically closed as it is a geometric point.
\item[A5.] The logarithmic structure at $b$ is Zariski as arranged above.
\item[A6.] $X_1 \to B_1$ is exact as it is  integral.
\item[A7.] $Q^{\gp}/P^{\gp}$ is torsion free as $X_1 \to B_1$ is saturated.
\end{itemize}

%
%
%



{\sc Description of generators.} Following \cite[Section 3.2.1]{ATW-relative},
we use the notation $\tilde Q$ for the image of the monoid $Q$ in $Q^{\gp}/P^{\gp}$. For an element $\tilde q \in \tilde Q$ we write $Q_{\tilde q}$ for its preimage in $Q$.

According to \cite[Proposition 3.4.5 (iv)]{ATW-relative} at any point $x \in X_1$ the ideal $\cI_\infty\hat \cO_{X,x}$ is generated by  finite sums of the form $a^{j}_{\tilde q} = \sum_{q \in Q_{\tilde q} }a^j_q u^q, j=1,\ldots,k_{\tilde q}$, with $a^j_q \in \hat\cO_{B,b}$.

Since $X_1 \to B_1$ is integral, the $P$-set $Q_{\tilde q}$ is freely generated by a single element $q_{\tilde q}$, providing a saturated free $\hat \cO_{B_1,b}$-submodule $\hat \cO_{B_1,b} \cdot u^{q_{\tilde q}} \subset  \hat \cO_{X,x}$. Therefore the $B_1$-torsion of the $\hat \cO_{X,x}$-module $ \hat \cO_{X,x} /\cI_\infty\hat \cO_{X,x} $
is generated by  its submodules $$\hat \cO_{B_1,b} \cdot u^{q_{\tilde q}}\ /\ (a^{j}_{\tilde q},j=1,\ldots,k_{\tilde q})\quad  \subset\quad  \hat \cO_{X,x}  /\cI_\infty\hat \cO_{X,x} .$$

Being in pure codimension 1 (equivalently, $\cI_\infty$ being flat) means that the submonoid generated by $(a^{j}_{\tilde q})$ is free of rank 0 or 1, and we may assume $k_{\tilde q}\leq 1$, so  at most one generator $$a_{\tilde q}=a^{1}_{\tilde q} = \sum_{q \in Q_{\tilde q} }a_q u^q$$ suffices for each $\tilde q$. Write $a'_{\tilde q}= \sum_{q \in Q_{\tilde q} }a_q u^{q-q_{\tilde q}}$ so that $a_{\tilde q} = a'_{\tilde q}\cdot u^{q_{\tilde q}}$. Noting that $q-q_{\tilde q} \in P$ we have $a'_{\tilde q} \in \hat\cO_{B_1,b}$.

Its divisor satisfies  $\Div(a'_{\tilde q})\subset D_{B_1}$.  Since $D_{B_1}$ is a \emph{monomial} divisor we  have $\Div(a'_{\tilde q})$ is monomial, namely $(a'_{\tilde q}) = (u^{q_0})$ for some $q_0 \in P$. This means that $\cI_\infty$ is monomial, so $J_1$ is sumbonomial.
\end{proof}

To complete the proof of Proposition \ref{Prop:monomialization} we note that, by \cite[Lemma 3.1.6]{ATW-relative}, the $\QQ$-ideal  $J$ is submonomial as well, as needed.\qed

\bibliographystyle{amsalpha}
\bibliography{principalization}

\end{document}